\documentclass[12pt,a4paper,reqno]{amsart}

\usepackage{amsfonts}
\usepackage{amsmath}
\usepackage{amssymb}
\usepackage{amsthm}
\usepackage{amsxtra}
\usepackage{bbm}
\usepackage{braket}
\usepackage{comment}
\usepackage{extarrows}

\usepackage{graphicx}
\usepackage{latexsym}

\usepackage{mathrsfs}
\usepackage{yhmath}

\usepackage{nicematrix}
\usepackage{mathtools}

\usepackage{float}
\usepackage{booktabs}
\usepackage{verbatim}
\usepackage{xcolor}

\usepackage[pdfborder={0 0 0}]{hyperref} 
\hypersetup{breaklinks, raiselinks}

\usepackage{bm}

\usepackage{cleveref}

\usepackage{a4wide}
\usepackage{fullpage}

\theoremstyle{plain}
\newtheorem{Theorem}{Theorem}[section]

\newtheorem{Lemma}[Theorem]{Lemma}

\theoremstyle{definition}

\newtheorem{Assumptions and Discussion}[Theorem]{Assumptions and Discussion}

\theoremstyle{remark}

\newtheorem*{acknowledgment*}{Acknowledgment}

\numberwithin{equation}{section}

\usepackage[shortlabels]{enumitem}
\SetEnumerateShortLabel{a}{\textup{(\alph*)}}
\SetEnumerateShortLabel{A}{\textup{(\Alph*)}}
\SetEnumerateShortLabel{1}{\textup{(\arabic*)}}
\SetEnumerateShortLabel{i}{\textup{(\roman*)}}
\SetEnumerateShortLabel{I}{\textup{(\Roman*)}}

\def\tr{\operatorname{tr}}

\begin{document}

\title{Bernstein-type theorem for stationary hypersurfaces of the Euler-Dierkes-Huisken functional}

\author[Hongbin Cui, Jiahuan Li, Xiaowei Xu]{Hongbin Cui \quad Jiahuan Li\quad  Xiaowei Xu}

\address{Hongbin Cui, School of Mathematical Sciences, University of Science and Technology of China, Hefei, Anhui, 230026, P.R.~China}
\email{cuihongbin@ustc.edu.cn}

\address{Jiahuan Li, School of Mathematical Sciences, University of Science and Technology of China, Hefei, Anhui, 230026, P.R.~China}
\email{jiahuan@mail.ustc.edu.cn}

\address{Xiaowei Xu (The corresponding author.), School of Mathematical Sciences, University of Science and Technology of China, Hefei, Anhui, 230026, P.R.~China}
\email{xwxu09@ustc.edu.cn}

\begin{abstract}
We say that a hypersurface $\Sigma \subset\mathbb{R}^{n+1}$ is $\alpha$-stationary if it is a critical point of the Euler-Dierkes-Huisken functional $\mathcal{E}_\alpha(\Sigma)=\int_\Sigma|X|^\alpha\, d\mathcal{H}^n$, introduced by Dierkes and Huisken in \cite{[DH-24]}. In this paper, we prove that every smooth, complete, connected, embedded $\alpha$-stationary hypersurface in $\mathbb{R}^{n+1}$ passing through the origin with $\alpha>0$ is a linear hyperplane.

\end{abstract}
\maketitle

\section{Introduction}
In 1744, L. Euler considered the least moment of inertia for planar curves with fixed endpoints and introduced the functional
\begin{equation*}
\mathcal{E}_2(X(t))=\int_a^b|X'(t)|^2\, ds,
\end{equation*}
where $X(t)=(x_1(t),x_2(t))\subset\mathbb{R}^2$, $t\in[a,b]$, and $ds=|X'(t)|dt$. M. Mason (\cite{[M-1906]}), L. Tonelli (\cite{[T-1921]}), C. Carathedory (\cite{[C-1935]}) studied regular extremals of a more general functional
\begin{equation*}
\mathcal{E}_\alpha(X(t))=\int_a^b|X'(t)|^\alpha\, ds,
\end{equation*}
where $\alpha>0$. Recently, U. Dierkes and G. Huisken (\cite{[DH-24]}) considered the $n$-dimensional analogue functional
\begin{equation}\label{A-1}
\mathcal{E}_\alpha(\Sigma):=\int_\Sigma|X|^\alpha\, d\mathcal{H}^n
\end{equation}
for hypersurfaces in $\mathbb{R}^{n+1}$, where $X$ is the position of a point in $\mathbb{R}^{n+1}$, $\alpha\in \mathbb{R}$, and $\mathcal{H}^n$ is the $n$-diemnsional Hausdorff measure on $\mathbb{R}^{n+1}$. We call (\ref{A-1}) the \emph{Euler Dierkes-Huisken functional} (EDH-functional) in the sequel. The Euler-Lagrange equation of EDH-functional is
\begin{equation}\label{A-2}
H=\alpha\, |X|^{-2}\, \langle X,\nu\rangle,
\end{equation}
where $H$ is the mean curvature of $\Sigma$, and $\nu$ is the unit normal vector field of $\Sigma$.
A hypersurface is said to be \emph{$\alpha$-stationary} or \emph{EDH-stationary} if it satisfies (\ref{A-2}) in $\mathbb{R}^{n+1}\setminus \{0\}$. We say that an $\alpha$-stationary hypersurface $\Sigma$ \emph{passes through the origin} $0\in \mathbb{R}^{n+1}$ if $0\in \Sigma$ and $\Sigma\setminus \{0\}$ satisfies (\ref{A-2}). In particular, if $\Sigma=\{(x,u(x))\}$ is a graph of a smooth function $u$ over 
$\mathbb{R}^n$, then (\ref{A-2}) becomes
\begin{equation}\label{A-3}
\mbox{div}\big(\frac{\nabla u}{\sqrt{1+|\nabla u|^2}}\big)=\alpha\,\frac{u-\langle x,\nabla u\rangle}{(|x|^2+u^2)\sqrt{1+|\nabla u|^2}},
\end{equation}
where $\nabla u$ denotes the gradient of $u$ in $\mathbb{R}^n$. Note that if $\alpha=0$, the EDH-functional reduces to the area functional, and (\ref{A-3}) is precisely the minimal surface equation. 

After S. Bernstein (\cite{[B-1927]}) proved that a minimal graph over $\mathbb{R}^2$ must be a plane, it is natural to ask whether a minimal graph over $\mathbb{R}^n$ is necessarily a hyperplane for $n\geq 3$. This question became known as the \emph{Bernstein problem}. The work of Fleming (\cite{[F-1962]}), De Giorgi (\cite{[D-1965]}), Almgren (\cite{[A-1966]}), and J. Simons (\cite{[S-1968]}) showed that the Bernstein problem holds for $n\leq 7$. However, E. Bombieri, E. De Giorgi, and E. Giusti (\cite{[BDG-1969]}) constructed counterexamples for $n\geq 8$.

It is known that a minimal graph is a particular class of stable minimal hypersurface. Naturally, this leads to the \emph{stable Bernstein problem}: must a complete, stable minimal hypersurface in $\mathbb{R}^{n+1}$ be a hyperplane? Significant progress has been made on this problem. Do Carmo and C.K. Peng (\cite{[DP-1979]}), D. Fischer-Colbrie and R. Schoen (\cite{[FS-1980]}), and A.V. Porogelov (\cite{[P-1981]}) gave a positive answer for $n=2$. R. Schoen, L. Simon and S.T. Yau (\cite{[SSY-1975]}), R. Schoen and L.Simon (\cite{[SS-1981]}) studied the problem under the Euclidean volume assumptions for $n\leq 6$. C. Bellettini (\cite{[B-2025]}) developed their approach via De Giorgi's iteration. Recently, O. Chodosh and Ch. Li \cite{[CL-2024]} gave a positive answer for $n=3$. Shortly after, two alternative proofs were provided independently by G. Catino, P. Mastrolia and A. Roncoroni \cite{[CMR-2024]}, and by O. Chodosh and Ch. Li (\cite{[CL-2023]}). Subsequently, O. Chodosh, Ch. Li, P. Minter and D. Stryker (\cite{[CLMS-2024]}) applied the strategy in \cite{[CL-2023]} to solve the problem for $n=4$. L. Mazet (\cite{[M-2024]}) settled the case $n=5$. The problem remains open for $n=6$.

The Bernstein problem for hypersurfaces with respect to weighted area functionals has also drawn considerable attention in geometry and analysis. U. Dierkes (\cite{[D-1993]}, \cite{[D-1995]}) established Bernstein-type theorems for stable graphs and area-minimizing hypersurfaces associated with the weighted area functional, where the weight is a power of the last coordinate. L. Wang (\cite{[W-2011]}), Q. Guang and J. Zhu (\cite{[GZ-2017]}) obtained  Bernstein theorems for graphic self-shrinkers. C. Mooney and Y. Yang (\cite{[MY-2024]}) have completed the solution to the anisotropic Bernstein problem, in which the weight function depends on the normal vector. For results on Bernstein-type theorems for other nonlinear geometric PDEs, we refer the reader to \cite{[M-2024]}.

In \cite{[DH-24]}, U. Dierkes and G. Huisken established fundamental properties of $\alpha$-stationary hypersurfaces and investigated the stability and area-minimizing properties​ of hypercones associated with the functional (\ref{A-1}). H.B. Cui and X.W. Xu (\cite{[CX-2025]}) extended the theory of Dierkes and Huisken for​ higher codimension surfaces in Euclidean space. U. Dierkes and  R. López (\cite{[DL-2026]}) proved that an axially symmetric $\alpha$-stationary surface in $\mathbb{R}^3\setminus \{0\}$  with $\alpha>0$ is an entire graph defined on the plane perpendicular to the axis of symmetry. For the classification of $\alpha$-stationary surfaces in $\mathbb{R}^3$ under various geometric assumptions, we refer the reader to \cite{[DL-2026]}, \cite{[L-2025-1]}, and \cite{[L-2025-2]}. In this paper, we prove the following Bernstein-type theorem for $\alpha$-stationary hypersurfaces.

\begin{Theorem}\label{Theorem 1.1}
Let $\Sigma\subset \mathbb{R}^{n+1}$ be a smooth, complete, connected, embedded $\alpha$-stationary hypersurface passing through the origin with $\alpha>0$, then $\Sigma$ is a linear hyperplane.
\end{Theorem}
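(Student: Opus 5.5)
The plan is to show that $\Sigma$ coincides with its tangent hyperplane $T=T_0\Sigma$ near the origin, and then to propagate this by unique continuation. Rotate coordinates so that $T=\{x_{n+1}=0\}$. Near $0$, write $\Sigma$ as the graph of a smooth function $u$ over a ball $B_r\subset\mathbb{R}^n$, with $u(0)=0$ and $\nabla u(0)=0$. On $B_r\setminus\{0\}$ the function $u$ satisfies (\ref{A-3}). I will clear the singular denominator and use the form
\begin{equation*}
(|x|^2+u^2)\,\sqrt{1+|\nabla u|^2}\,\mbox{div}\Big(\frac{\nabla u}{\sqrt{1+|\nabla u|^2}}\Big)=\alpha\,(u-\langle x,\nabla u\rangle).
\end{equation*}
Since $u$ is smooth, this identity also holds at $0$ by continuity.

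\textbf{Step 1: $u$ vanishes to infinite order at $0$.} Suppose not. Let $p_k$ be the lowest-order nonzero homogeneous part of the Taylor expansion of $u$, so $k\ge 2$. Comparing terms of degree $k$ in the cleared equation, every nonlinear contribution has strictly higher degree, because $u^2=O(|x|^{2k})$ and $|\nabla u|^2=O(|x|^{2k-2})$. This leaves
\begin{equation*}
|x|^2\Delta p_k=\alpha\,(p_k-\langle x,\nabla p_k\rangle)=\alpha(1-k)\,p_k .
\end{equation*}
Decompose $p_k=\sum_j |x|^{2j}h_{k-2j}$ with each $h_m$ a harmonic homogeneous polynomial of degree $m$. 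Using
\begin{equation*}
\Delta\big(|x|^{2j}h_m\big)=2j\,(2j+n-2+2m)\,|x|^{2j-2}h_m,
\end{equation*}
each component must satisfy
\begin{equation*}
\big[\,2j(2j+n-2+2m)+\alpha(k-1)\,\big]\,|x|^{2j}h_m=0 .
\end{equation*}
The bracket is strictly positive because $\alpha>0$ and $k\ge2$. Hence every $h_m=0$, so $p_k=0$, a contradiction. This is exactly where the hypotheses $\alpha>0$ and $0\in\Sigma$ enter.

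\textbf{Step 2: strong unique continuation at the singular point.} Step 1 gives $u=O(|x|^N)$ for every $N$, and I want to conclude that $u\equiv0$ near $0$. The cleared equation is quasilinear elliptic, but its principal part degenerates like $|x|^2$. Equivalently, after dividing by $|x|^2$, the lower-order terms carry singular coefficients of size $|x|^{-2}$ and $|x|^{-1}$. These are exactly critical, so standard strong unique continuation results (for instance those requiring potentials in $L^{n/2}$) do not apply directly.

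I expect this step to be the main obstacle. My first approach is to pass to log-polar coordinates $t=\log|x|$, $\theta\in S^{n-1}$. There the operator $|x|^2\Delta-\alpha\,x\cdot\nabla+\alpha$ becomes the constant-coefficient operator
\begin{equation*}
\partial_t^2+(n-2-\alpha)\,\partial_t+\Delta_{S^{n-1}}+\alpha
\end{equation*}
on the half-cylinder $(-\infty,\log r)\times S^{n-1}$. The nonlinear terms contribute perturbations whose coefficients decay like $e^{t}$ as $t\to-\infty$. Infinite-order vanishing means that $u$ decays faster than $e^{Nt}$ for every $N$. A Carleman estimate with weight $e^{-\tau t}$ for the constant-coefficient cylinder operator should then force $u\equiv0$. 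One chooses $\tau$ away from the discrete set of exponents given by the spectrum of $\Delta_{S^{n-1}}$ and lets $\tau\to\infty$, and the decaying perturbations are absorbed for $t$ sufficiently negative. This is an Agmon--Nirenberg type argument for asymptotically cylindrical operators. An alternative route is a frequency-function (Almgren/Garofalo--Lin) monotonicity argument adapted to the weight $|x|^{\alpha}$.

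\textbf{Step 3: globalization.} Step 2 shows that $\Sigma$ and $T$ agree on a neighbourhood of $0$. On $\Sigma\setminus\{0\}$ equation (\ref{A-2}) is a smooth elliptic equation, so its solutions are real analytic. The hyperplane $T$ is itself $\alpha$-stationary, since there $H=0$ and $\langle X,\nu\rangle=0$. The set of points of $\Sigma$ near which $\Sigma$ coincides with $T$ is open, and by unique continuation for the graph equation over $T$ away from the origin it is also closed in $\Sigma$. Since $\Sigma$ is connected, $\Sigma\subset T$. Finally, $\Sigma$ is complete and embedded, so $\Sigma$ is a closed subset of $T$ that is also open in $T$; hence $\Sigma=T$, a linear hyperplane.
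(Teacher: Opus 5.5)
\textbf{Verdict: the overall strategy is correct, but Step 2 takes a genuinely different route from the paper and needs two fixes.} Your Steps 1 and 3 are essentially the paper's. Step 1 is the same comparison of lowest-order homogeneous terms and harmonic decomposition; your bracket is the paper's identity $2i(2k-2i+n-2)=\alpha(1-k)$ rearranged, and it also covers $n=1$. Step 3 is the same open--closed argument, except that you get closedness from real analyticity away from the origin rather than from strong unique continuation.

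\textbf{How the paper does Step 2.} It sets $w=|x|^{\alpha/2}u$. This cancels the critical drift $\alpha x_i/(|x|^2+u^2)$ up to an $o(|x|^{-1})$ error and leaves a potential of size $C(\alpha)|x|^{-2}$. That is exactly the setting of the Lin--Nakamura--Wang theorem: any constant in front of $|x|^{-2}|w|$, a small constant in front of $|x|^{-1}|\nabla w|$, and a Lipschitz principal part. Since that theorem needs $n\ge2$, the paper handles $n=1$ separately by an ODE argument.

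\textbf{How your route differs.} You use that the linearization at the hyperplane is an Euler-type operator, constant-coefficient in $(t,\theta)=(\log|x|,x/|x|)$. A Carleman estimate with weight $e^{-\tau t}$, with $\tau$ midway between consecutive indicial roots, can then be proved by hand using the Fourier transform in $t$ and spherical harmonics. This is self-contained and needs neither the conjugation nor any smallness of the drift, since the drift belongs to the model operator. It also treats all $n\ge1$ at once. The price is that you must actually prove the estimate and set up the absorption correctly.

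\textbf{First fix: a sign.} The linearization is $|x|^2\Delta+\alpha\,x\cdot\nabla-\alpha$, as your own Step 1 identity shows. So the cylinder operator is $L_0=\partial_t^2+(n-2+\alpha)\partial_t+\Delta_{S^{n-1}}-\alpha$. This does not change the structure of the argument.

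\textbf{Second fix: the absorption, which is the substantive point.} For such $\tau$ the conjugated symbol satisfies $|p(\xi,\ell)|\gtrsim\tau+|\xi|+\sqrt{\lambda_\ell}$. This gives $\tau\|e^{-\tau t}v\|+\|e^{-\tau t}\nabla_{t,\theta}v\|\le C\|e^{-\tau t}L_0v\|$ with $C$ independent of $\tau$. However, near $\xi=0$ with $\lambda_\ell\approx\tau^2$ the symbol has size only about $\tau$, so second derivatives are controlled only with a loss of a factor $\tau$.

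Consequently you cannot absorb $-u^2\Delta u$ and $(|x|^2+u^2)u_iu_ju_{ij}/(1+|\nabla u|^2)$ as second-order perturbations with small coefficients. The threshold $T_0$ would then depend on $\tau$, and the cutoff argument no longer directly gives $u\equiv0$.

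The remedy uses Step 1. Since $u$ is a fixed smooth function vanishing to infinite order, write the nonlinear terms as $(-u\Delta u)\,u$ and $\big((|x|^2+u^2)u_ju_{ij}/(1+|\nabla u|^2)\big)u_i$. These are zeroth- and first-order terms whose coefficients, after passing to $(t,\theta)$, are $O(e^{Nt})$ for every $N$. They can be absorbed on $t<T_0$ with $T_0$ independent of $\tau$. Letting $\tau\to\infty$ in the standard cutoff argument then gives $u\equiv0$ near the origin, and with these adjustments your argument is complete.
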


\emph{Remark}. Since any hyperplane that is $\alpha$-stationary must contain the origin, the assumption that $\Sigma$ passes through the origin is natural. 
The smoothness assumption on $\Sigma$ at the origin is essential.  The smoothness assumption of $\Sigma$ at the origin is essential. Indeed, the graph $\{(x, m|x|)\,|\,x\in\mathbb{R}\}\subset \mathbb{R}^2$ is $\alpha$-stationary for $m\neq 0$ and $\alpha>0$  in $\mathbb{R}^2$, but it is not smooth at the origin. Furthermore, the Simon's cones $\{(x,y)\,|\, x,y\in\mathbb{R}^{\ell+1},\;|x|=|y|\}\subset \mathbb{R}^{\ell+1}\times \mathbb{R}^{\ell+1}$ provide examples of $\alpha$-stationary (\cite{[DH-24]}) hypersurfaces for any $\alpha>0$, although they are singular at the origin. It is worth noting that neither the stability nor the area-minimizing property is required in the proof of our Bernstein-type theorem.

This paper is organized as follows. In Section 2, we show that the Taylor coefficients of an $\alpha$-stationary graph defined by a smooth function $u$ vanishes at the origin. Section 3 is dedicated to establishing the local flatness of such graphs. Finally, in Section 4, we complete the proof of the main theorem.

\section{Local estimates for stationary graph}
In this section, we prove that a smooth function $u$ whose graph is $\alpha$-stationary has vanishing Taylor coefficients at the origin, i.e., all its partial derivatives of every order​ vanish at 0.

\begin{Theorem}\label{Theorem 2.1}
Let $u\in C^\infty(B_{r_0})$ satisfies $u(0)=0$ and $\nabla u(0)=0$. If $u$ satisfies the equation \emph{(\ref{A-3})} with $\alpha >0$ on $B_{r_0}\setminus\{0\}$, then $u$ has vanishing Taylor coefficients at the origin. Moreover, for a fixed multi-index $\beta\in\mathbb{N}^n$, we have
\begin{equation*}
|\nabla^\beta u|\leq C_{\beta,m}\,|x|^m,
\end{equation*}
for all $m\in \mathbb{N}$, where $C_{\beta,m}>0$ is a constant depends only on $\beta$ and $m$.
\end{Theorem}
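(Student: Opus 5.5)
The plan is to argue by contradiction on the lowest-order nonvanishing term of the Taylor expansion of $u$ at the origin. Suppose not all Taylor coefficients of $u$ vanish at $0$. Since $u(0)=0$ and $\nabla u(0)=0$, there is a smallest integer $k\ge 2$ such that
\begin{equation*}
u(x)=P_k(x)+R(x),\qquad R(x)=O(|x|^{k+1}),\quad \nabla R=O(|x|^{k}),\quad \nabla^2 R=O(|x|^{k-1}),
\end{equation*}
where $P_k\not\equiv 0$ is a homogeneous polynomial of degree $k$. I would substitute this into (\ref{A-3}), multiply by $|x|^2$, and extract the leading homogeneous part of degree $k$.

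\emph{Left-hand side.} Since $|\nabla u|=O(|x|^{k-1})$, we can expand
\begin{equation*}
\mbox{div}\big(\nabla u/\sqrt{1+|\nabla u|^2}\big)=\Delta u+O(|\nabla u|^2|\nabla^2u|)=\Delta P_k+O(|x|^{k-1})+O(|x|^{3k-4}).
\end{equation*}
Since $k\ge 2$, we have $3k-4\ge k-2+1$, so the left side equals $\Delta P_k+o(|x|^{k-2})$.

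\emph{Right-hand side.} Euler's identity $x\cdot\nabla P_k=kP_k$ gives
\begin{equation*}
u-\langle x,\nabla u\rangle=(1-k)P_k+O(|x|^{k+1}).
\end{equation*}
For the remaining factors, $|x|^2+u^2=|x|^2(1+O(|x|^{2k-2}))$ and $\sqrt{1+|\nabla u|^2}=1+O(|x|^{2k-2})$. Hence the right side equals $\alpha(1-k)P_k/|x|^2+o(|x|^{k-2})$.

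Now put $x=t\theta$ with $|\theta|=1$, divide by $t^{k-2}$, and let $t\to0$. This yields the exact polynomial identity
\begin{equation*}
|x|^2\Delta P_k=\alpha(1-k)P_k\quad\text{on }\mathbb{R}^n.
\end{equation*}

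To show this forces $P_k\equiv0$, I would use the harmonic decomposition $P_k=\sum_{j}|x|^{2j}h_{k-2j}$, where each $h_m$ is a homogeneous harmonic polynomial of degree $m$. A direct computation gives
\begin{equation*}
\Delta(|x|^{2j}h_m)=2j(2j+2m+n-2)|x|^{2j-2}h_m,
\end{equation*}
so $|x|^2\Delta$ acts on the $j$-th component as multiplication by $c_j=2j(2j+2m+n-2)\ge 0$. Since the decomposition is unique, the identity above splits componentwise into $c_j\,h_{k-2j}=\alpha(1-k)h_{k-2j}$. Because $\alpha>0$ and $k\ge2$, the right-hand factor $\alpha(1-k)$ is strictly negative, while $c_j\ge0$. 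Therefore every $h_{k-2j}=0$, so $P_k\equiv0$, which is a contradiction. (Alternatively, one can avoid the decomposition: multiply by $P_k$ and integrate over the unit sphere. Using $\Delta P_k=\Delta_{S}P_k+k(k+n-2)P_k$ on $S^{n-1}$, the left side gives $-\int|\nabla_SP_k|^2+k(k+n-2)\int P_k^2$. The sign is less transparent this way, so I prefer the decomposition.) Hence all Taylor coefficients vanish.

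The quantitative bound then follows from Taylor's theorem. Every derivative $\nabla^\beta u$ also has vanishing Taylor coefficients at $0$, so Taylor's formula with remainder of order $m$ on $B_{r_0/2}$ gives
\begin{equation*}
|\nabla^\beta u(x)|\le C_{\beta,m}|x|^m,\qquad C_{\beta,m}=C(n,m)\sup_{B_{r_0/2}}|\nabla^{|\beta|+m}u|,
\end{equation*}
so the constant depends only on $\beta$, $m$ (and the fixed function $u$ and radius). The step I expect to require the most care is the bookkeeping of error orders in the nonlinear terms, especially the denominator $|x|^2+u^2$ and the factor $\sqrt{1+|\nabla u|^2}$, to confirm that every correction is genuinely $o(|x|^{k-2})$ once the equation is divided by the leading scale. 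The sign argument itself is the key idea: it is exactly where $\alpha>0$ enters.
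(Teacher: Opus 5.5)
Your proposal is correct and follows the paper's proof essentially step for step: isolate the lowest-order Taylor term $P_k$, derive the leading-order identity $|x|^2\Delta P_k=\alpha(1-k)P_k$, use the harmonic decomposition to see that $|x|^2\Delta$ acts componentwise by the nonnegative factors $2j(2k-2j+n-2)$ while $\alpha(1-k)<0$, and finish with Taylor's theorem for the derivative bounds. Your explicit $O(|x|^{k-1})$ error bookkeeping is slightly more careful than the paper's, and your sign argument also covers $n=1$ without the separate case the paper treats, since $c_j\ge 0$ holds there as well.
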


\emph{Proof}. We rewrite equation (\ref{A-3}) as
\begin{equation}\label{B-1}
\big(\delta_{ij}-\frac{u_i u_j}{1+|\nabla u|^2}\big)u_{ij}=\alpha\,\frac{u-\langle x,\nabla u\rangle}{|x|^2+u^2}
\end{equation}
in $B_{r_0}\setminus \{0\}$. We will prove that equation (\ref{B-1}) leads to a contradiction if $u$ has a finite order term in its Taylor expansion at the origin.

Since $u(0)=0$ and $\nabla u(0)=0$, the Taylor expansion of $u$ at the origin takes the form 
\begin{equation}\label{B-2}
u(x)=P_k(x)+o(|x|^k),
\end{equation}
where $k\geq 2$ and $P_k(x)$ is a non-trivial homogeneous polynomial of degree $k$.
This implies
\begin{equation}\label{B-3}
\nabla u(x)=\nabla P_k(x)+o(|x|^{k-1}),\hspace{0.5cm} \nabla^2 u(x)=\nabla^2 P_k(x)+o(|x|^{k-2}).
\end{equation}
Since $k\geq 2$, it follows from (\ref{B-3}) that
\begin{equation*}
\frac{u_i u_j}{1+|\nabla u|^2}\,u_{ij}=o(|x|^{k-2}).
\end{equation*}
Thus, the left hand side of (\ref{B-1}) is equal to 
\begin{equation}\label{B-4}
\Delta P_k(x)+o(|x|^{k-2}).
\end{equation}
For the right hand side, Euler's identity for homogeneous polynomials gives 
\begin{equation*}
\langle x, \nabla P_k(x)\rangle = kP_k(x),
\end{equation*}
and hence 
\begin{equation*}
u-\langle x, \nabla u\rangle=(1-k)P_k(x)+o(|x|^k).
\end{equation*}
Using $k\geq 2$ once more, we obtian 
\begin{equation*}
\frac{1}{|x|^2+u^2}=|x|^{-2}+o(|x|^{-2}).
\end{equation*}
Thus the right hand side of (\ref{B-1}) is equal to
\begin{equation}\label{B-5}
\alpha(1-k)\frac{P_k(x)}{|x|^2}+o(|x|^{k-2}).
\end{equation}
Substituting (\ref{B-4}) and (\ref{B-5}) into (\ref{B-1}), and comparing the homogeneous terms of degree $k-2$, we obtain
\begin{equation}\label{B-6}
|x|^2\Delta P_k(x)=\alpha (1-k) P_k(x).
\end{equation}

We first treat the case $n=1$. Writing $P_k(x)=cx^k$ with $c\neq 0$ and $k\geq 2$, we have $|x|^2\Delta P_k(x)=k(k-1)P_k(x)$. Then (\ref{B-6}) gives $k(k-1)=\alpha (1-k)$, which contradicts $k\geq 2$ and $\alpha>0$.

We next treat the case $n\geq 2$. By the standard harmonic decomposition of homogeneous polynomials (see Chapter 5 in \cite{[ABR-01]}), $P_k(x)$ can be written uniquely as
\begin{equation}\label{B-7}
P_k(x)=\sum\limits_{i=0}^{[k/2]}|x|^{2i}H_{k-2i}(x),
\end{equation}
where $H_{k-2i}(x)$ is a homogeneous harmonic polynomial of degree $k-2i$. Since $\Delta H_{k-2i}=0$ and 
$\langle x, \nabla H_{k-2i}(x)\rangle=(k-2i)H_{k-2i}$, we compute
\begin{equation*}
\Delta(|x|^{2i}H_{k-2i})=2i(2i+n-2)|x|^{2i-2}H_{k-2i}+4i(k-2i)|x|^{2i-2}H_{k-2i},
\end{equation*}
which yields
\begin{equation}\label{B-8}
|x|^2\Delta(|x|^{2i}H_{k-2i})=2i(2k-2i+n-2)|x|^{2i}H_{k-2i},
\end{equation}
for all $i\in\{0,\ldots, [k/2]\}$. Substituting (\ref{B-7}) and (\ref{B-8}) into (\ref{B-6}), and using uniqueness of harmonic decomposition, we find that the coefficients must satisfy
\begin{equation*}
2i(2k-2i+n-2)=\alpha(1-k)
\end{equation*}
for some $i\in\{0,\ldots, [k/2]\}$, which contradicts $\alpha>0$ and $k\geq 2$. Thus no such solution exists, and the theorem follows from the Taylor expansion of a smooth function.

\hfill$\Box$

\emph{Remark}. Theorem \ref{Theorem 2.1} does not hold​ for a​ minimal graph, i.e., the case $\alpha=0$. However, it holds for some other $\alpha$, such as negative irrationals. 
On the other hand, from Theorem \ref{Theorem 2.1}, the coefficients $a^{ij}(x)$
in (\ref{B-1}) can be continuously extended​ to the origin by setting​ $a^{ij}(0)=\delta_{ij}$, for smooth functions have vanishing Taylor coefficients.
	​
\section{Local flatness for stationary graph}
In this section, we prove the local flatness of smooth $\alpha$-stationary graphs through the origin, using the strong unique continuation result of Ch.L. Lin, G. Nakamura and J.N. Wang in \cite{[LNW-11]}.

We rewrite (\ref{B-1}) as
\begin{equation}\label{C-1}
a^{ij}(x)u_{ij}+b^i(x)u_i+c(x)u=0,
\end{equation}
where
\begin{equation*}
b^{ij}(x):=\frac{u_i u_j}{1+|\nabla u|^2},\hspace{0.3cm}
a^{ij}(x):=\delta_{ij}-b^{ij}(x), \hspace{0.3cm}
b^i(x):=\frac{\alpha x_i}{|x|^2+u^2},\hspace{0.3cm}
 c(x):=-\frac{\alpha}{|x|^2+u^2}.
\end{equation*}
To apply the theory in \cite{[LNW-11]}, we define the transformation as
\begin{equation}\label{C-2}
w(x):=|x|^su(x),
\end{equation}
where $s=\alpha/2$. Clearly, $w(x)$ also has vanishing Taylor coefficients at the origin if $u$ does.
Away from the origin, a direct computation shows that
\begin{equation}\label{C-3}
u_i=|x|^{-s}w_i-s|x|^{-s-2}x_i w,
\end{equation}
and
\begin{equation}\label{C-4}
u_{ij}=|x|^{-s}w_{ij}-s|x|^{-s-2}(x_iw_j+x_jw_i+\delta_{ij}w)+s(s+2)|x|^{-s-4}x_ix_jw.
\end{equation}
Substituting (\ref{C-3}) and (\ref{C-4}) into (\ref{C-1}), and multiplying by $|x|^s$ on both sides, we obtain
\begin{equation}\label{C-5}
a^{ij}(x)w_{ij}+\tilde{b}^i(x)w_i+\tilde{c}(x)w=0,
\end{equation}
where
\begin{equation}\label{C-6}
\tilde{b}^i(x)=b^i(x)-2s|x|^{-2}a^{ij}(x)x_j,
\end{equation}
and
\begin{equation}\label{C-7}
\tilde{c}=-s|x|^{-2}a^{ij}(x)\delta_{ij}+s(s+2)|x|^{-4}a^{ij}(x)x_ix_j-s|x|^{-2}b^i(x)x_i+c(x).
\end{equation}

\begin{Lemma}\label{Lemma 3.1}
Let $P(x,D)=a^{ij}(x)\,\partial_{ij}$ be the second-order differential operator in \emph{(\ref{C-1})}. For any smooth
function $u(x)$ with vanishing Taylor coefficients at the origin that satisfies \emph{(\ref{C-1})}, then $w(x):=|x|^su(x)$ satisfies
\begin{equation*}
|P(x,D)w|\leq C_1\,|x|^{-2}|w|+C_2\, |x|^{-1}|\nabla w|,
\end{equation*}
where $C_2\rightarrow 0$ as $|x|\rightarrow 0$.
\end{Lemma}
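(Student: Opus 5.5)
From equation (\ref{C-5}) we have $P(x,D)w = -\tilde b^i w_i - \tilde c\, w$. So it suffices to bound the coefficients near the origin:
\begin{equation*}
|\tilde c(x)|\le C_1|x|^{-2}, \qquad |x|\,|\tilde b(x)|\to 0 \text{ as } |x|\to0 .
\end{equation*}
We then take $C_2:=\sup_{|y|\le |x|}|y|\,|\tilde b(y)|$, which tends to $0$. The inputs are Theorem \ref{Theorem 2.1}, which gives $|u|\le C_m|x|^m$ and $|\nabla u|\le C_m|x|^m$ for every $m$, and the remark after it, which gives $a^{ij}=\delta_{ij}-b^{ij}$ with $|b^{ij}|\le |\nabla u|^2=O(|x|^{m})$ for any $m$.

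\textbf{Step 1: the zeroth-order coefficient.} Write $|x|^2+u^2=|x|^2(1+\epsilon(x))$, where $\epsilon:=u^2/|x|^2=O(|x|^{2m-2})$. This gives $b^i=\alpha x_i|x|^{-2}(1+O(|x|^N))$ and $c=-\alpha|x|^{-2}(1+O(|x|^N))$. Substituting these and $a^{ij}=\delta_{ij}+O(|x|^N)$ into (\ref{C-7}) shows that every term of $\tilde c$ is $O(|x|^{-2})$. So $|\tilde c|\le C_1|x|^{-2}$. In fact the leading part is
\begin{equation*}
\bigl(-sn+s(s+2)-s\alpha-\alpha\bigr)|x|^{-2},
\end{equation*}
but only its order matters.

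\textbf{Step 2: the first-order coefficient.} This is the main point: $\tilde b$ must have a cancellation rather than merely $|\tilde b|=O(|x|^{-1})$. Using (\ref{C-6}) and $a^{ij}x_j=x_i-b^{ij}x_j$, we get
\begin{equation*}
\tilde b^i=\frac{\alpha x_i}{|x|^2+u^2}-\frac{2s\,x_i}{|x|^2}+2s|x|^{-2}b^{ij}x_j .
\end{equation*}
Since $2s=\alpha$, the first two terms combine to
\begin{equation*}
\alpha x_i\Big(\frac{1}{|x|^2+u^2}-\frac{1}{|x|^2}\Big)=-\frac{\alpha x_i u^2}{|x|^2(|x|^2+u^2)},
\end{equation*}
whose modulus is at most $\alpha|x|^{-3}u^2=O(|x|^{2m-3})$. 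The last term is bounded by $\alpha|x|^{-1}|\nabla u|^2=O(|x|^{2m-1})$. Choosing $m\ge 2$, we get $|x|\,|\tilde b|=O(|x|)\to 0$. This is exactly where the choice $s=\alpha/2$ is essential. It is the step I would check most carefully, since without the cancellation only $C_2$ bounded would follow.

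\textbf{Step 3: conclusion.} The triangle inequality gives
\begin{equation*}
|P(x,D)w|\le |\tilde c|\,|w|+|\tilde b|\,|\nabla w|\le C_1|x|^{-2}|w|+C_2|x|^{-1}|\nabla w|
\end{equation*}
on a punctured ball $B_{r}\setminus\{0\}$. At the origin the inequality holds trivially, because $w$ and all its derivatives vanish there to infinite order. The constant $C_1$ depends on $\alpha$, $n$ and the bounds from Theorem \ref{Theorem 2.1}.
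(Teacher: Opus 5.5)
Your argument is correct and is essentially the paper's proof: the paper also reads off $P(x,D)w=-\tilde b^iw_i-\tilde c\,w$ from (\ref{C-5}), uses $2s=\alpha$ to reach exactly your cancelled expression for $\tilde b^i$ in (\ref{C-8}), and bounds $\tilde c$ in (\ref{C-9}) by $C(s)|x|^{-2}$ plus terms controlled by $|B|$ that vanish as $|x|\to0$ (your coefficient $2s=\alpha$ on the $b^{ij}x_j$ term is the correct one; the paper's displayed factor ``$2$'' is a slip). Your Step 2 also shows that only $u(0)=0$ and $\nabla u(0)=0$ are really needed here, since $u=O(|x|^2)$ and $\nabla u=O(|x|)$ already give $|x|\,|\tilde b|=O(|x|^2)$.
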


\emph{Proof}. Substituting $a^{ij}(x)=\delta_{ij}-b^{ij}(x)$ into (\ref{C-6}) and (\ref{C-7}), using the definition of $b^i(x)$ and $c(x)$, we obtain
\begin{eqnarray}\label{C-8}
\tilde{b}^i(x)&=&\frac{2b^{ij}x_j}{|x|^2}-\frac{\alpha u^2x_i}{|x|^2(|x|^2+u^2)}\nonumber\\
&\leq& \Big(2 |B|+\frac{\alpha u^2}{|x|^2+u^2}\Big)\,\frac{1}{|x|},
\end{eqnarray}
where $|B|$ is the norm of matrix $B:=(b^{ij}(x))_{n\times n}$, and 
\begin{eqnarray}\label{C-9}
\tilde{c}(x)&=&\frac{s(s-n+2)}{|x|^2}-\frac{2s(s+1)}{|x|^2+u^2}+\frac{sb^{ii}(x)}{|x|^2}-\frac{s(s+2)b^{ij}(x)x_ix_j}{|x|^4}\nonumber\\
&\leq&\frac{C(s)}{|x|^2}+\Big(s|\tr(B)|+\frac{s(s+2)|b^{ij}(x)x_ix_j|}{|x|^2}\Big)\,\frac{1}{|x|^2},
\end{eqnarray}
where $C(s)=s(n+3s+4)$ depends only on $\alpha$. Since $u(x)$ has vanishing Taylor coefficients at the origin, (\ref{C-8}) and (\ref{C-9}) imply
\begin{equation}\label{C-10}
|\tilde{b}^i(x)|\leq \epsilon(|x|)\,|x|^{-1},\hspace{0.3cm} |\tilde{c}(x)|\leq (C(s)+\epsilon(|x|))|x|^{-2},
\end{equation}
where $\epsilon(|x|)\rightarrow 0$ as $|x|\rightarrow 0$. The assertion now follows from (\ref{C-5})and (\ref{C-10}).

\hfill$\Box$

To obtain local flatness, we appeal to Theorem 1.2 from \cite{[LNW-11]}.

\begin{Theorem}\label{Theorem 3.2} \emph{(\cite{[LNW-11]})}
Let $\Omega\subset \mathbb{R}^n$ be a connected open set containing $0$, where $n\geq 2$. Let $P(x,D)=a^{ij}(x)\partial_{ij}$ be an elliptic differential operator on $\Omega$ with $a^{ij}(0)$ real symmetric matrix and $a^{ij}(x)$ Lipschitz on $\Omega$. If $w(x)\in H^1_{loc}(\Omega)$ is a nontrivial solution satisfying
\begin{equation*}
|P(x,D)w|\leq C_1\,|x|^{-2}|w|+C_2\, |x|^{-1}|\nabla w|
\end{equation*}
with $C_2$ is sufficiently small, then there exist constants $C_0>0$, depending only on $n$ and $w(x)$, and a constant $m_0$ depending only on $P(x,D)$ and $w(x)$, such that
\begin{equation*}
\int_{|x|<r}|w(x)|^2dx\geq C_0 r^{m_0},
\end{equation*}
for sufficiently small $r$.
\end{Theorem}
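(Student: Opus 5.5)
This statement is Theorem 1.2 of Lin, Nakamura and Wang \cite{[LNW-11]}, quoted as a black box, so the honest plan is to reproduce the skeleton of their Carleman-estimate argument rather than derive it from earlier results here. I would first normalize: after a linear change of variables we may take $a^{ij}(0)=\delta_{ij}$. Writing $P(x,D)=\Delta+(a^{ij}(x)-\delta_{ij})\partial_{ij}$, the Lipschitz hypothesis gives $|a^{ij}(x)-\delta_{ij}|\le L|x|$. This perturbation is of lower order compared with the weights below: it contributes a term of size $L|x|\,|\nabla^2 w|$, which is absorbed by the second-derivative part of a Carleman estimate. For nondivergence form one works instead with $\partial_i(a^{ij}\partial_j w)-(\partial_i a^{ij})\partial_j w$. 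The extra first-order term is bounded by $L|\nabla w|$, which is better than the allowed $C_2|x|^{-1}|\nabla w|$.

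The key step is a Carleman estimate with the singular weight $\varphi(x)=|x|^{-\tau}$, or more precisely $\exp(\tau\psi(\log|x|))$ with $\psi$ slightly convex. In polar coordinates $t=\log|x|$, the operator $|x|^2\Delta$ becomes $\partial_t^2+(n-2)\partial_t+\Delta_{S^{n-1}}$. Conjugating by the weight and using the spectral decomposition of $\Delta_{S^{n-1}}$, whose eigenvalues are $k(k+n-2)$, one obtains for $w$ supported away from $0$ and in a small ball
\begin{equation*}
\tau^3\int\varphi^2|x|^{-4}|w|^2+\tau\int\varphi^2|x|^{-2}|\nabla w|^2\le C\int\varphi^2|P w|^2 ,
\end{equation*}
valid for $\tau$ large and outside a discrete set of values. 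Here one takes $\tau$ away from the values where the conjugated radial symbol degenerates.

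Plugging in the differential inequality, the right side is at most $2C_1^2\int\varphi^2|x|^{-4}|w|^2+2C_2^2\int\varphi^2|x|^{-2}|\nabla w|^2$. The first term is absorbed once $\tau^3>4CC_1^2$. The second is absorbed only if $C_2^2<\tau/(4C)$; this is where smallness of $C_2$ (uniform in $\tau$ at the critical scaling) enters. I expect this to be the main obstacle: the $|x|^{-1}|\nabla w|$ term has exactly the critical homogeneity, so the gain in $\tau$ on the gradient term must be tracked carefully. The convexity of $\psi$ is what produces that gain.

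Finally, I would apply the estimate to $\chi w$, where $\chi$ is a cutoff equal to $1$ on $\{r<|x|<R/2\}$ and supported in $\{r/2<|x|<R\}$; this is justified for $w\in H^1_{loc}$ via elliptic regularity, giving $H^2_{loc}$ away from the singularity. The commutator terms live on the two annuli. Comparing weights, the inner-annulus contribution of $w$ is bounded below by $(R/r)^{\tau}$ times a fixed nonzero quantity on a middle annulus, which is nonzero by weak unique continuation for nontrivial $w$. This yields a doubling/three-ball inequality and hence $\int_{|x|<r}|w|^2\ge C_0r^{m_0}$ with $m_0$ comparable to $\tau$, so $m_0$ depends on $P$ (through $C$ and $L$) and on $w$.
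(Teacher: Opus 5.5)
The paper gives no argument of its own here (it only points to Section~3 of Lin--Nakamura--Wang), so your sketch has to stand on its own. There is a genuine gap: \textbf{the central Carleman inequality you display is false.}

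To see this, take $P=\Delta$ and $\varphi=|x|^{-\tau}$. Set $t=\log|x|$, $\omega=x/|x|$, and write $w=e^{\gamma t}v$ with $\gamma=\tau+2-\frac n2$. In $L^2(dt\,d\omega)$ your inequality reads
\begin{equation*}
\tau^3\|v\|^2+\tau\big(\|(\partial_t+\gamma)v\|^2+\|\nabla_\omega v\|^2\big)\le C\|L_\gamma v\|^2 .
\end{equation*}
Here $L_\gamma$ acts on degree-$k$ spherical harmonics with symbol $(i\xi+\gamma+k+n-2)(i\xi+\gamma-k)$. Test with $v=\chi(t)Y_k(\omega)$, where $\chi$ is a wide bump and $k$ is the integer nearest to $\gamma$. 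Then $\|L_\gamma v\|^2\approx(2\gamma+n-2)^2\operatorname{dist}(\gamma,\mathbb{Z})^2\|\chi\|^2\lesssim\tau^2\|\chi\|^2$, while your left side is at least $\tau^3\|\chi\|^2$. So the estimate fails for every large $\tau$, whatever discrete set you exclude.

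What the symbol actually gives, for $\tau$ large with $\operatorname{dist}(\tau-\frac n2,\mathbb{Z})=\frac12$, is
\begin{equation*}
\tau^2\int|x|^{-2\tau-4}|w|^2\,dx+\int|x|^{-2\tau-2}|\nabla w|^2\,dx\le C\int|x|^{-2\tau}|\Delta w|^2\,dx ,
\end{equation*}
with $C$ independent of $\tau$. The same test functions show this is sharp: the critical gradient term carries \emph{no} power of $\tau$. The factor $\tau^2$ absorbs $C_1$ of any size, but the $C_2$-term is absorbed only when $C_2$ is small relative to $C^{-1/2}$, uniformly in $\tau$. That is where the smallness hypothesis really enters. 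Your condition $C_2^2<\tau/(4C)$ would hold for every $C_2$ once $\tau$ is large, which already signals that the claimed gain is too strong.

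Appealing to convexity of $\psi$ does not repair this. The convexity gain on first-order terms is of size $\tau\psi''(t)$. To dominate $C_2^2$ uniformly up to the origin you would need $\psi''\ge c>0$. Then $e^{\tau\psi(\log|x|)}$ grows at least like $e^{c\tau(\log|x|)^2/2}$, and your final comparison of weights only yields $\int_{|x|<r}|w|^2\ge e^{-C(\log r)^2}$, not $C_0r^{m_0}$. Weights compatible with a finite vanishing order, e.g.\ $\psi(t)=t+\log(t^2)$, have $|\psi''|\sim(\log|x|)^{-2}\to0$. Their convexity gain degenerates at $0$; it can absorb errors carrying an extra factor $|x|$, but not the critical $|x|^{-1}|\nabla w|$ term.

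Your treatment of the Lipschitz coefficients is also incomplete. The displayed estimate has no second-derivative term, yet you absorb $L|x|\,|\nabla^2 w|$ into one. With power weights one only has $\int|x|^{-2\tau}|\nabla^2 w|^2\lesssim\tau^2\int|x|^{-2\tau}|\Delta w|^2$, again sharp on the same test functions. So that error can be absorbed only on $|x|\lesssim 1/(L\tau)$, a $\tau$-dependent ball. This is incompatible with your three-ball step, which needs $\tau$ chosen large in terms of $w$ on a fixed ball.

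The missing ingredient is a single Carleman estimate that does two things at once. It must keep the $\tau$-independent spectral-gap control of $|x|^{-1}|\nabla w|$ and $\tau|x|^{-2}|w|$. It must also use a logarithmic correction of the weight to supply the extra, degenerating gain needed for the Lipschitz part.
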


\emph{Proof}. A detailed proof can be found in Section 3 of \cite{[LNW-11]}.

\hfill$\Box$

\begin{Theorem}\label{Theorem 3.3}
Let $u\in C^\infty(B_{r})$ satisfies $u(0)=0$ and $\nabla u(0)=0$. If $u$ solves \emph{(\ref{A-3})} with $\alpha >0$ in $B_{r}\setminus\{0\}$, then $u\equiv 0$ on $B_{r_0}$ for sufficiently small $r_0$.
\end{Theorem}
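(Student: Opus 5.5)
The plan is to argue by contradiction using strong unique continuation. Suppose $u\not\equiv 0$ on every small ball $B_{r_0}$. By Theorem \ref{Theorem 2.1}, $u$ has vanishing Taylor coefficients at the origin, and for every $m$ we have $|u(x)|\le C_m|x|^m$ near $0$. The same then holds for $w=|x|^s u$ with $s=\alpha/2$: $|w(x)|\le C_m|x|^{m+s}$, and $w\in H^1_{loc}$ near $0$ because $u$ is smooth with all derivatives vanishing to infinite order. Consequently
\begin{equation*}
\int_{|x|<r}|w|^2\,dx\le C_m' r^{2m+2s+n}\quad\text{for every } m.
\end{equation*}
That is, $w$ vanishes to infinite order in the $L^2$ sense.

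Next we check the hypotheses of Theorem \ref{Theorem 3.2} for $n\ge 2$. By the Remark after Theorem \ref{Theorem 2.1}, $a^{ij}$ extends to $0$ with $a^{ij}(0)=\delta_{ij}$, which is real symmetric. Since $a^{ij}=\delta_{ij}-u_iu_j/(1+|\nabla u|^2)$ is a smooth function of $\nabla u$ and $u$ is smooth on $B_r$, $a^{ij}$ is smooth, hence Lipschitz. It is also uniformly elliptic, with eigenvalues in $[1/(1+|\nabla u|^2),1]$. Lemma \ref{Lemma 3.1} gives the differential inequality
\begin{equation*}
|P(x,D)w|\le C_1|x|^{-2}|w|+C_2|x|^{-1}|\nabla w|,
\end{equation*}
where $C_2=\epsilon(|x|)\to0$. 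Shrinking the domain to a small ball $\Omega=B_{\rho}$ therefore makes $C_2$ as small as Theorem \ref{Theorem 3.2} requires. The theorem then gives $\int_{|x|<r}|w|^2\ge C_0r^{m_0}$ for small $r$ whenever $w$ is nontrivial on $B_\rho$. Choosing $m$ with $2m+2s+n>m_0$ contradicts the infinite-order vanishing above, so $w\equiv0$ on $B_\rho$, and hence $u\equiv 0$ on $B_{r_0}$ with $r_0=\rho$.

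The case $n=1$ is not covered by Theorem \ref{Theorem 3.2} and must be handled separately. Here (\ref{A-3}) is a second-order ODE $u''=\alpha(1+u'^2)(u-xu')/(x^2+u^2)$ on each of $(0,r)$ and $(-r,0)$. My plan is to use an ODE argument. The lines $u=cx$ solve the equation, and in polar coordinates the curve is a stationary curve for $\int |X|^\alpha ds$. Writing the graph as $\theta=\theta(\rho)$ gives a first integral, because the functional is rotation invariant: $\rho^{\alpha}\rho^2\theta'/\sqrt{1+\rho^2\theta'^2}=\text{const}$, which is the angular momentum. Flatness at the origin forces this constant to be $0$, since the left side tends to $0$ as $\rho\to0$ while it is constant along the curve. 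So $\theta'\equiv0$, and the curve is a ray. Together with $\nabla u(0)=0$, this gives $u\equiv0$.

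The main obstacle is verifying that Theorem \ref{Theorem 3.2} is really applicable. Its hypothesis is that $C_2$ is small globally on $\Omega$, not merely that $C_2\to0$ at the origin. We must also make sure the constant $C_1$ and the $H^1$ membership of $w$ cause no problem: $|x|^s$ is not smooth, but the infinite-order decay of $u$ handles this. Localizing to a small ball resolves the smallness of $C_2$, and the contradiction then comes down to comparing polynomial lower bounds with super-polynomial decay.
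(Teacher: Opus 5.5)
Your proposal is correct. For $n\ge 2$ it is the paper's argument: infinite-order vanishing from Theorem~\ref{Theorem 2.1}, the substitution $w=|x|^{\alpha/2}u$, Lemma~\ref{Lemma 3.1}, and the vanishing-order bound of Theorem~\ref{Theorem 3.2}. You are in fact more careful than the paper about shrinking to a ball on which $C_2$ is small and about $w\in H^1_{loc}$.

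For $n=1$ you take a genuinely different route. The paper sets $u=xw$ on $x>0$, which turns (\ref{B-1}) into $xw''+a\,w'=0$ with $a=2+\alpha[1+(w+xw')^2]/(1+w^2)\ge 2$. Integrating gives $|w'(x)|\ge |w'(x_0)|(x_0/x)^2$, which is incompatible with boundedness of $w'$ near $0$ unless $w'\equiv 0$.

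You instead use the conservation law coming from the rotation invariance of $\int |X|^\alpha ds$. The angular momentum $\rho^{\alpha+2}\theta'/\sqrt{1+\rho^2\theta'^2}$ is constant and bounded by $\rho^{\alpha+1}$, hence zero, so the curve is radial. This is more conceptual and slightly stronger. It shows that every branch of an $\alpha$-stationary curve ending at the origin is a radial segment, which is exactly the paper's example $(x,m|x|)$. It also shows that smoothness at $0$ is needed only to select the ray along the $x$-axis, not to make the constant vanish. So your phrase ``flatness forces the constant to be $0$'' slightly misplaces where that hypothesis enters.

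To make the argument complete you have two options. The first is to check two things:
\begin{enumerate}
\item $\rho=\sqrt{x^2+u^2}$ is strictly increasing on $(0,x_0)$, so the curve is a polar graph down to $\rho=0$. This holds because $d\rho/dx=(x+uu')/\rho$ and $uu'=O(x^3)$.
\item (\ref{A-3}) is equivalent to the Euler--Lagrange equation of the polar Lagrangian.
\end{enumerate}
The second, simpler option is to verify directly from (\ref{A-3}) that $Q=(x^2+u^2)^{\alpha/2}(u-xu')/\sqrt{1+u'^2}=|X|^\alpha\langle X,\nu\rangle$ satisfies $Q'=0$. Then $|Q|\le (x^2+u^2)^{(\alpha+1)/2}\to 0$ gives $u=xu'$, so $u/x$ is constant and equal to $u'(0)=0$.

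The paper's ODE trick needs neither reparametrization nor Noether's theorem, but it hides the geometric reason behind the result.
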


\emph{Proof}. We first treat the case $n=1$. Set $u(x):=xw(x)$ for $x>0$. Clearly, $u(x)$ has vanishing Taylor coefficients at the origin, and so do $w(x)$ and $w'(x)$. Then the equation (\ref{B-1}) becomes
\begin{equation}\label{C-11}
xw''(x)+a(x,w,w')w'(x)=0,
\end{equation}
where $a(x,w,w')=2+\alpha[1+(w+xw')^2]/(1+w^2)>0$ is smooth for $x>0$. Equation (\ref{C-11}) implies 
\begin{equation}\label{C-12}
w'(x)=w'(x_0)e^{-\int_{x_0}^x\frac{a}{t}\,dt},
\end{equation}
for any $x_0>0$.
Taking the limit $x\rightarrow 0^+$ in (\ref{C-12}) yields $w'(0)=w'(x_0)e^{\int_0^{x_0} \frac{a}{t}\,dt}$, which contradicts $w'(0)=0$ unless $w'(x_0)=0$. Since $x_0>0$ is arbitrary and $w'(0)=0$, it follows that $w'(x)\equiv0$. Consequently, $w(x)\equiv 0$ in light of $w(0)=0$, and thus $u(x)\equiv 0$.

We next treat the case $n\geq 2$. It follows from Theorem \ref{Theorem 2.1} that $u(x)$ has vanishing Taylor coefficients at the origin. Clearly, $w(x)=|x|^s u(x)$ with $s=\alpha/2$ shares the same property. That is, there is a constant $C_m>0$ such that 
\begin{equation}\label{C-13}
|w(x)|^2\leq C_m\,|x|^m,
\end{equation}
for all $m\in\mathbb{N}$.
We now extend the coefficients $a^{ij}(x)$ to $0$ by setting $a^{ij}(0)=\delta_{ij}$. The extended coefficients remain Lipschitz in a small ball. Then, Lemma \ref{Lemma 3.1} ensures​ that $w(x)$ satifies the hypotheses of Theorem \ref{Theorem 3.2}. We claim that $u\equiv 0$ in sufficiently small ball. Suppose, to the contrary, that $w(x)$ is not identically zero. Then Theorem \ref{Theorem 3.2} yields
\begin{equation*}
\int_{B_r}|w(x)|^2 dx \geq C_0 |x|^{m_0},
\end{equation*}
for all sufficiently small $r>0$, which contradicts inequality (\ref{C-13}). Thus, $u(x)\equiv 0$ in a sufficiently small neighborhood of the origin.

\hfill$\Box$

\section{Proof of the main theorem}
We complete the proof of the main theorem in this section.

\begin{Theorem}\label{Theorem 4.1}
Let $\Sigma\subset \mathbb{R}^{n+1}$ be a smooth, connected, embedded, complete $\alpha$-stationary hypersurface passing through the origin with $\alpha>0$, then $\Sigma$ is a linear hyperplane.
\end{Theorem}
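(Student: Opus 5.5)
The plan is to combine the local flatness result, Theorem \ref{Theorem 3.3}, with a unique continuation argument for the mean curvature equation, and then use completeness and connectedness to pass from a neighborhood of the origin to all of $\Sigma$.

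\emph{Step 1: graph representation near the origin.} Since $0\in\Sigma$ and $\Sigma$ is smooth and embedded, we may rotate coordinates so that $T_0\Sigma=\{x_{n+1}=0\}$. The functional (\ref{A-1}) and equation (\ref{A-2}) are invariant under orthogonal transformations, because $|X|$ and $\langle X,\nu\rangle$ are. Near the origin, $\Sigma$ is then the graph of a smooth function $u$ on some ball $B_r\subset\mathbb{R}^n$, with $u(0)=0$ and $\nabla u(0)=0$. Moreover $u$ satisfies (\ref{A-3}) on $B_r\setminus\{0\}$. Theorem \ref{Theorem 3.3} gives $u\equiv0$ on some $B_{r_0}$. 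Hence $\Sigma$ coincides with the hyperplane $\Pi:=\{x_{n+1}=0\}$ in a neighborhood of $0$.

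\emph{Step 2: open--closed argument.} Let $U\subset\Sigma$ be the set of points $p$ such that $\Sigma$ coincides with $\Pi$ in a neighborhood of $p$ in $\Sigma$. By Step 1, $U$ is nonempty, and it is open by definition. For closedness, take $p\in\overline{U}$. Then $p\in\Pi$ by continuity, and $\nu(p)=e_{n+1}$.

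\emph{Case $p\neq 0$.} Near $p$, write $\Sigma$ as a graph $x_{n+1}=v(x)$ over $\Pi$ on a ball $B$ around $p$ not containing $0$. There (\ref{A-3}) is a quasilinear elliptic equation with smooth coefficients, and it is solved both by $v$ and by $0$. The difference $v-0$ therefore satisfies a linear elliptic equation with bounded (indeed smooth) coefficients. This linear equation is obtained by writing the difference of the two equations as an integral of derivatives along the segment $tv$, $t\in[0,1]$. The difference vanishes on a nonempty open subset of $B$, namely the projection of $U$ near $p$. By weak (or strong) unique continuation for second-order elliptic equations with Lipschitz leading coefficients, $v\equiv0$ on $B$, so $p\in U$.

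\emph{Case $p=0$.} This is covered by Step 1.

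So $U$ is closed in $\Sigma$, and by connectedness $U=\Sigma$. Hence $\Sigma\subset\Pi$.

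\emph{Step 3: $\Sigma=\Pi$.} Now $\Sigma$ is an open subset of $\Pi$, being an $n$-manifold embedded in it. It is also complete with the induced metric, which is the flat metric of $\Pi$. A complete open subset of $\mathbb{R}^n$ with the Euclidean metric is closed. Being nonempty, open and closed in the connected space $\Pi$, it equals $\Pi$. Thus $\Sigma$ is the linear hyperplane $\Pi$.

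\emph{Main obstacle.} The expected main difficulty is the closedness step in Step 2. One must make sure that near $p$ the surface really is a graph over $\Pi$: this holds because $\nu(p)=e_{n+1}$ by continuity. One must also ensure that the graph domain is a connected ball meeting the open set where $v=0$. Then unique continuation applies cleanly. The delicate analysis at the origin is already handled by Theorem \ref{Theorem 3.3}, and away from the origin the equation is regular.
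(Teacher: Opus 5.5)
Your proposal is correct and follows the paper's proof essentially step for step. The steps are the same: flatness near the origin from Theorem \ref{Theorem 3.3}, an open--closed argument on $\Sigma$, and completeness to conclude $\Sigma=\Pi$. In the closedness step you use standard unique continuation for the equation, which is regular away from the origin. This is exactly the alternative the paper mentions (Garofalo--Lin), and it is cleaner than the paper's primary route of re-applying Theorem \ref{Theorem 3.3} to the translated function, which no longer satisfies (\ref{A-3}) centered at $0$.
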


\emph{Proof}. Since $\Sigma$ is embedded and $0\in\Sigma$, there exists a neighborhood of $0$ in $\Sigma$ that can be written as a graph over $T_0\Sigma$. After rotating $\mathbb{R}^{n+1}$ if necessary, we identify $T_0\Sigma$ with $\mathbb{R}^n=\{(x,0)\}\subset \mathbb{R}^{n+1}$. Consequently, $\Sigma$ can be represented as the graph of a smooth function $u(x)$ in $B_r(0)\subset \mathbb{R}^n$ with $u(0)=0$ and $\nabla u(0)=0$, where $u(x)$ solves (\ref{B-1}) in $B_{r}(0)\setminus \{0\}$. By Theorem \ref{Theorem 2.1} and Theorem \ref{Theorem 3.3}, we conclude that $u(x)\equiv 0$ in some $B_{r_0}(0)$. Hence, $\Sigma$ coincides with the hyperplane $\mathbb{R}^n$ in a neighborhood of $0$.

Define the set
\begin{equation*}
\mathcal{O}:=\{p\in\Sigma\,|\, \mbox{there exists a neighborhood} \;\mathcal{O}_p\subset \Sigma \;\mbox{of}\;p\;\mbox{such that}\; \mathcal{O}_p\subset \mathbb{R}^n\}.
\end{equation*}
Clearly, $\mathcal{O}$ is non-empty and open in $\Sigma$. We claim that $\mathcal{O}$ is also closed in $\Sigma$. Let $\{p_i\}\subset \mathcal{O}$ be a sequence coverging to $ p\in \Sigma\setminus \{0\}$. Since $p_i\rightarrow p$ and $p_i\in \mathbb{R}^n$, it follows that $p\in\mathbb{R}^n$. Moreover, by the smoothness of $\Sigma$, we have $T_p\Sigma=\mathbb{R}^n$. Thus, near $p$, $\Sigma$
can be represented as the graph of a smooth function $w(x)$ defined on $B_r(p)$ for some $r>0$, which solves (\ref{B-1}).
Since $p_i\in\mathcal{O}$, there exists a neighborhood $\mathcal{O}_{p_i}$ where $w(x)\equiv 0$. As $p_i\to p$, this implies that $w(x)$ vanishes to infinite order at $p$, i.e., all derivatives of $w(x)$ vanish at $p$. Considering the translated function $\tilde{w}(y):=w(y+p)$ in $B_r(0)$, and applying Theorem \ref{Theorem 3.3} again, we conclude that $\tilde{w}(y)\equiv 0$ in some $B_{r_0}(0)$. Hence, $w(x)\equiv 0$ in $B_{r_0}(p)$, which means $p\in\mathcal{O}$. 
An alternative proof follows from the strong unique continuation property for second-order elliptic PDEs (Theorem 1.1 in \cite{[GL-1987]} by N. Garofalo and F.H. Lin).

Since $\mathcal{O}$ is both open and closed in the connected hypersurface $\Sigma$, we have $\Sigma=\mathcal{O}\subseteq \mathbb{R}^n$. Moreover, $\Sigma$ is complete, the Hopf-Rinow theorem implies 
$\Sigma =\mathbb{R}^n$.

\hfill$\Box$

\noindent \textbf{Acknowledgments}.
This work is supported by the National Natural Science Foundation of China (NSFC) (Grant No. 11871445, 2025YFA1017601), the Stable Support Plan for Youth Teams in Basic Research Fields, Chinese Academy of Sciences (CAS) (Grant No. YSBR-001), and the Fundamental Research Funds for the Central Universities.

\end{document}